\def\Speaker{$^{*}$\protect\footnotetext{ \lowercase{}}}
\def\authorsaddresses#1{\dedicatory{#1}}
\newtheorem{theorem}{Theorem}[section]
\newtheorem{lemma}[theorem]{Lemma}
\newtheorem{corollary}[theorem]{Corollary}
\theoremstyle{definition}
\theoremstyle{remark}
\numberwithin{equation}{section}
\begin{document}
\setcounter{page}{1}

\title[A fibering map approach for a Laplacian system ... ]{}{\textbf{A Fibering Map Approach for a Laplacian System With \\\begin{center}Sign-Changing Weight Function\end{center}}

\author[ S. S. Kazemipoor AND M. Zakeri ] { Seyyed Sadegh Kazemipoor\Speaker And Mahboobeh Zakeri}
\authorsaddresses{Department of Mathematical Analysis, Charles University, Prague, Czech Republic.\\
}
\thanks{\subjclass MMSC[2010] : {Primary 35J20; Secondary 35J50, 35J91.}\\ { Keywords : Laplacian system, Variational methods, Nehari manifold,
fibering map, Sign-changing weight functions.}}
\begin{abstract}
We prove the existence of at least two positive solutions for the
Laplacian system
$$\left\{\begin{array}{ll}
-\Delta u=\lambda a(x)|u|^{q-2}u+\frac{\alpha}{\alpha+\beta}b(x)|u|^{\alpha-2}u|v|^{\beta}&$for~$x\in\Omega$$,  \\
-\Delta v=\lambda a(x)|v|^{q-2}v+\frac{\beta}{\alpha+\beta}b(x)|u|^{\alpha}|v|^{\beta-2}v&$for~$x\in\Omega$$,\qquad(E_{\lambda}) \\
u=v=0 &$for~$x\in\partial\Omega.$$
\end{array}\right.$$
On a bounded region $\Omega$ by using the Nehari manifold and the
fibering maps associated with the Euler functional for the system.
\end{abstract}
\maketitle
\section{Introduction}We shall discuss the
existence of positive solutions of the Laplacian system
\[\left\{\begin{array}{lll}
-\Delta u=\lambda a(x)|u|^{q-2}u+\frac{\alpha}{\alpha+\beta}b(x)|u|^{\alpha-2}u|v|^{\beta}&$for~$x\in\Omega$$,  \\
-\Delta v=\lambda a(x)|v|^{q-2}v+\frac{\beta}{\alpha+\beta}b(x)|u|^{\alpha}|v|^{\beta-2}v&$for~$x\in\Omega$$,\qquad(E_{\lambda}) \\
u=v=0 &$for~$x\in\partial\Omega$$
\end{array}\right.\]
\\
Where $\Omega$ is a bounded region with smooth boundary in $\Bbb
R^{N}$.\\
$\alpha>1,~~\beta>1$ satisfying $\alpha+\beta<2^{*}$
($2^{*}=\frac{2N}{N-2}\quad if~N>2, \quad 2^{*}=\infty\quad if
~N\leq 2$), $\lambda>0$, $1<q<2,~~ a\in L^{p^{*}}(\Omega)$ where
$p^{*}=\frac{\alpha+\beta}{\alpha+\beta-q}$ and
$a,b:\Omega\rightarrow\Bbb{R}$ are smooth
functions which are somewhere positive but which may change sign on $\Omega$.\\
Set $\alpha=\beta=\frac{p}{2}, u=v$. Then problem
$(E_({\lambda}_))$ reduces to the semilinear scalar elliptic
equations with concave-convex nonlineariities:
\[\left\{\begin{array}{llll}
-\Delta u=\lambda f(x)|u|^{q-2}u+h(x)|u|^{p-2}u&$in~$\Omega$$,&\hspace{2cm}(E_({\lambda}_))& \\
u=0 &$in~$\partial\Omega$$.
\end{array}\right.\]
For $2<p<2^{*}$ and the weight functions $f\equiv h\equiv 1$, the
authors Ambrosetti-Brezis-Cerami \cite{4}have investigated Eq.
$(E_({\lambda}_))$ in \cite{4}. They found that there exists
$\lambda_{0}>0$ such that Eq. $(E_({\lambda}_))$ admits at least two
positive solutions for $\lambda\in(0,\lambda_{0})$, has a positive
solution for $\lambda=\lambda_{0}$ and no positive solution
exists for $\lambda>\lambda_{0}$. For more general cases, we
refer the reader to Afrouzi-Ala-Kazemipoor \cite{1}, Ambrosetti-Azorezo-Peral \cite{3}, de
Figueiredo-Gossez-Ubilla \cite{13}, EL Hamidi \cite{15} and Wu \cite{25}, etc.
Recently, in \cite{25} the author has considered a semilinear scalar
elliptic equation involving concave-convex nonlinearities and
sign-changing weight function, and showed multiplicity results
with respect to the parameter $\lambda$ via the extraction of
Palais-Smale sequences in the Nehari manifold, where for the
definition of Nehari
manifolds we refer the reader to Nehari \cite{18} or Willem \cite{23}.\\
For the semilinear elliptic systems with concave-convex
nonlinearities, the authors Adriouch- EL Hamidi \cite{5} considered the
following problems:
\[\left\{\begin{array}{lll}
-\Delta u=\lambda u+\frac{\alpha}{\alpha+\beta}|u|^{\alpha-2}u|v|^{\beta}&$in~$\Omega$$,  \\
-\Delta v=\mu |v|^{q-2}v+\frac{\beta}{\alpha+\beta}|u|^{\alpha}|v|^{\beta-2}v&$in~$\Omega$$, \\
u=v=0 &$on~$\partial\Omega.$$
\end{array}\right.\]
They proved this system has at least two positive solutions when
the pair of the parameters $(\lambda,\mu)$ belongs to a certain
subset of $\Bbb R^{2}$. For more similar problems, we refer the
reader to Ahammou \cite{2}, Alves-de Morais Filho-Souto \cite{6},
Bozhkov-Mitidieri \cite{8}, Cl$\acute{e}$ment-de Figueiredo-Mitidieri
\cite{11}, de Figueiredo-Felmer \cite{12}, El Hamidi \cite{16}, Squassina \cite{19}
and v$\acute{e}$lin
\cite{22}, etc.\\
By the above results we know that the existence and multiplicity
of positive solutions of semilinear elliptic problems depend on
the nonlinearity term . Let $S$ be the best Sobolev constant for the embedding of
$H_{0}^{1}(\Omega)$ in
$L^{\alpha+\beta}(\Omega)$. Then we have the following result.\\
In this paper, we give a very simple variational proof which is
similar to proof
of Brown-Wu (see \cite{9}) to prove the existence of at least two positive solutions of system $(E_{\lambda})$ which is similar to the Wu system \cite{26}.\\
We shall throughout use the function space
$W=W_{0}^{1,2}(\Omega)\times W_{0}^{1,2}(\Omega)$ with norm
$$||(u,v)||=(\int_{\Omega}|\nabla u|^{2}dx+\int_{\Omega}|\nabla v|^{2}dx)^{\frac{1}{2}}$$
and the standard $L^{p}(\Omega)\times L^{p}(\Omega)$ spaces whose
norms we denote by $||(u,v)||_{p}$.\\
\section{Fibering Maps and The Nehari Manifold}
The Euler functional associated with $(E_{\lambda})$ is
$$\begin{array}{rcl}J_{\lambda}(u,v)&=&\frac{1}{2}(\int_{\Omega}|\nabla u|^{2}dx+\int_{\Omega}|\nabla
v|^{2}dx)\\
&-&\frac{\lambda}{q}(\int_{\Omega}a(x)|u|^{q}dx+\int_{\Omega}a(x)|v|^{q}dx)\\
&-&\frac{1}{\alpha+\beta}(\int_{\Omega}b(x)|u|^{\alpha}|v|^{\beta}dx)\\
\end{array}$$
for all $(u,v)\in W$.\\
As $J_{\lambda}$ is not bounded below on $W$, it is useful to
consider the functional on the Nehari manifold
$$M_{\lambda}(\Omega)=\{(u,v)\in W~:~~\langle J^{'}_{\lambda}(u,v),(u,v)\rangle=0\}$$
where $\langle~,~\rangle$ denotes the usual duality. Thus
$(u,v)\in M_{\lambda}(\Omega)$ if and only if
$$\begin{array}{rcl}&&(\int_{\Omega}|\nabla u|^{2}dx+\int_{\Omega}|\nabla
v|^{2}dx)-\lambda(\int_{\Omega}a(x)|u|^{q}dx+\int_{\Omega}a(x)|v|^{q}dx)\\
&-&(\int_{\Omega}b(x)|u|^{\alpha}|v|^{\beta}dx)=0.\hspace{4cm}(2.1)\\
\end{array}$$
Clearly $M_{\lambda}(\Omega)$ is a much smaller set than $W$ and, as
we shall show, $J_{\lambda}$ is much better behaved on
$M_{\lambda}(\Omega)$. In particular, on $M_{\lambda}(\Omega)$ we
have that
$$\begin{array}{rcl}J_{\lambda}(u,v)&=&(\frac{1}{2}-\frac{1}{q})(\int_{\Omega}|\nabla u|^{2}dx+\int_{\Omega}|\nabla
v|^{2}dx)\\
&+&(\frac{1}{q}-\frac{1}{\alpha+\beta})(\int_{\Omega}b(x)|u|^{\alpha}|v|^{\beta}dx)\\
&=&(\frac{1}{2}-\frac{1}{\alpha+\beta})(\int_{\Omega}|\nabla u|^{2}dx+\int_{\Omega}|\nabla
v|^{2}dx)\\
&-&\lambda(\frac{1}{q}-\frac{1}{\alpha+\beta})(\int_{\Omega}a(x)|u|^{q}dx+\int_{\Omega}a(x)|v|^{q}dx).\\
\end{array}\hspace{1cm}(2.2)$$\

\begin{theorem} $J_{\lambda}$ is coercive and bounded below on $M_{\lambda}(\Omega)$.
\end{theorem}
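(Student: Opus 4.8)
The plan is to work entirely from the second expression for $J_{\lambda}$ on $M_{\lambda}(\Omega)$ recorded in (2.2), namely
$$J_{\lambda}(u,v)=\Big(\tfrac{1}{2}-\tfrac{1}{\alpha+\beta}\Big)\|(u,v)\|^{2}-\lambda\Big(\tfrac{1}{q}-\tfrac{1}{\alpha+\beta}\Big)\Big(\int_{\Omega}a(x)|u|^{q}dx+\int_{\Omega}a(x)|v|^{q}dx\Big),$$
since this form has already eliminated the sign-indefinite $b$-term. The first observation is that both constants are strictly positive: from $\alpha>1$ and $\beta>1$ we get $\alpha+\beta>2>q$, whence $\tfrac12-\tfrac1{\alpha+\beta}>0$ and $\tfrac1q-\tfrac1{\alpha+\beta}>0$. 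Thus the quadratic part carries a favourable sign, and the whole problem reduces to dominating the lower-order $a$-term from above by a \emph{sublinear} power of the norm.

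To that end I would estimate $\int_{\Omega}a(x)|u|^{q}dx$ by H\"older's inequality with exponents $p^{*}=\frac{\alpha+\beta}{\alpha+\beta-q}$ and its conjugate. The choice of $p^{*}$ in the hypotheses is tailored precisely so that this conjugate exponent equals $\frac{\alpha+\beta}{q}$, giving $\int_{\Omega}a(x)|u|^{q}dx\le \|a\|_{p^{*}}\|u\|_{\alpha+\beta}^{q}$, and similarly for $v$. Because $\alpha+\beta<2^{*}$, the Sobolev embedding $W_{0}^{1,2}(\Omega)\hookrightarrow L^{\alpha+\beta}(\Omega)$ with best constant $S$ yields $\|u\|_{\alpha+\beta}\le S^{-1/2}\|\nabla u\|_{2}$, so that
$$\int_{\Omega}a(x)|u|^{q}dx+\int_{\Omega}a(x)|v|^{q}dx\le \|a\|_{p^{*}}S^{-q/2}\big(\|\nabla u\|_{2}^{q}+\|\nabla v\|_{2}^{q}\big).$$
Since $0<q/2<1$, a power-mean (concavity) inequality gives $\|\nabla u\|_{2}^{q}+\|\nabla v\|_{2}^{q}\le 2^{1-q/2}\|(u,v)\|^{q}$, so the right-hand side is bounded by a constant multiple of $\|(u,v)\|^{q}$.

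Inserting this bound, I would conclude that for every $(u,v)\in M_{\lambda}(\Omega)$,
$$J_{\lambda}(u,v)\ge \Big(\tfrac{1}{2}-\tfrac{1}{\alpha+\beta}\Big)\|(u,v)\|^{2}-\lambda C\,\|a\|_{p^{*}}\,\|(u,v)\|^{q},$$
with $C>0$ depending only on $S,q,\alpha,\beta$. The right-hand side is a function of $t=\|(u,v)\|$ of the shape $At^{2}-Bt^{q}$ with $A,B>0$ and $0<q<2$; such a function attains a finite minimum on $[0,\infty)$ and tends to $+\infty$ as $t\to\infty$. This delivers boundedness below and coercivity simultaneously.

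The step I expect to demand the most care is the H\"older--Sobolev chain: one must confirm that the conjugate exponent of $p^{*}$ is exactly $\frac{\alpha+\beta}{q}$, so that the emerging $L^{\alpha+\beta}$ norm is the one controlled by the embedding guaranteed by $\alpha+\beta<2^{*}$, and one must justify passing from the two separate gradient norms $\|\nabla u\|_{2}^{q}$ and $\|\nabla v\|_{2}^{q}$ to the single joint norm $\|(u,v)\|^{q}$, which is legitimate precisely because $q/2<1$. Once the exponent $q<2$ has been isolated against the quadratic leading term in this way, the coercivity and lower bound follow at once.
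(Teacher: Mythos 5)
Your proposal is correct and follows essentially the same route as the paper: both start from the second identity in (2.2) (which eliminates the sign-indefinite $b$-term), bound the $a$-term via H\"older with the conjugate pair $p^{*}=\frac{\alpha+\beta}{\alpha+\beta-q}$, $\frac{\alpha+\beta}{q}$ together with the Sobolev embedding into $L^{\alpha+\beta}$, and then conclude from the resulting inequality $J_{\lambda}(u,v)\geq A\|(u,v)\|^{2}-B\|(u,v)\|^{q}$ with $0<q<2$ that $J_{\lambda}$ is coercive and bounded below. Your write-up is in fact more careful than the paper's (you verify the conjugate exponent and justify combining the two gradient norms via concavity, steps the paper leaves implicit), but the argument is the same.
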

\begin{proof}[\bf Proof]
It follows from (2.2) and by the H$\ddot{o}$lder inequality
$$\begin{array}{rcl}J_{\lambda}(u,v)&=&\frac{\alpha+\beta-2}{2(\alpha+\beta)}||(u,v)||^{2}_{H}-(\frac{\alpha+\beta-q}{q(\alpha+\beta)})(\int_{\Omega}\lambda a(x)|u|^{q}+\int_{\Omega}\lambda|v|^{q}dx)\\
&\geq&\frac{\alpha+\beta-2}{2(\alpha+\beta)}||(u,v)||^{2}_{H}\\
&-&S^{q}(\frac{\alpha+\beta-q}{q(\alpha+\beta)})((|\lambda|||a||_{L^{p^{*}}})^{\frac{2}{2-q}}+(|\lambda|||a||_{L^{p^{*}}})^{\frac{2}{2-q}})^{\frac{2-q}{2}}||(u,v)||^{q}_{H}\\
&\geq&\frac{S^{\frac{2q}{2-q}}(q-2)(\alpha+\beta-q)^{\frac{2}{2-q}}}{2q(\alpha+\beta)(\alpha+\beta-2)^{\frac{q}{2-q}}}((|\lambda|||a||_{L^{p^{*}}})^{\frac{2}{2-q}}+(|\lambda|||a||_{L^{p^{*}}})^{\frac{2}{2-q}}).\\
\end{array}$$
Thus, $J_{\lambda}$ is coercive on $M_{\lambda}(\Omega)$ and
$$J_{\lambda}(u,v)\geq\frac{S^{\frac{2q}{2-q}}(q-2)(\alpha+\beta-q)^{\frac{2}{2-q}}}{2q(\alpha+\beta)(\alpha+\beta-2)^{\frac{q}{2-q}}}((|\lambda|||a||_{L^{p^{*}}})^{\frac{2}{2-q}}+(|\lambda|||a||_{L^{p^{*}}})^{\frac{2}{2-q}}).$$
This completes the proof.
\end{proof}
The Nehari manifold is closely linked to the behaviour of the
functions of the form $\phi_{u,v}:t\rightarrow
J_{\lambda}(tu,tv)\quad(t>0)$. Such maps are known as fibering
maps and were introduced by Drabek and Pohozaev in \cite{14} and are
also discussed in Brown and Zhang \cite{10}. If $(u,v)\in W$, we have
$$\begin{array}{rcl}\phi_{u,v}(t)&=&\frac{1}{2}t^{2}(\int_{\Omega}|\nabla u|^{2}dx+\int_{\Omega}|\nabla
v|^{2}dx)\\
&-&\lambda\frac{t^{q}}{q}(\int_{\Omega}a(x)|u|^{q}dx+\int_{\Omega}a(x)|v|^{q}dx)\\
&-&\frac{t^{\alpha+\beta}}{\alpha+\beta}(\int_{\Omega}b(x)|u|^{\alpha}|v|^{\beta}dx)\\
\end{array}\hspace{2cm}(2.3)$$

$$\begin{array}{rcl}\phi_{u,v}^{'}(t)&=&t (\int_{\Omega}|\nabla u|^{2}dx+\int_{\Omega}|\nabla
v|^{2}dx)\\
&-&\lambda t^{q-1}(\int_{\Omega}a(x)|u|^{q}dx+\int_{\Omega}a(x)|v|^{q}dx)\\
&-&t^{\alpha+\beta-1}(\int_{\Omega}b(x)|u|^{\alpha}|v|^{\beta}dx)\\
\end{array}\hspace{2cm}(2.4)$$

$$\begin{array}{rcl}\phi_{u,v}^{''}(t)&=&(\int_{\Omega}|\nabla u|^{2}dx+\int_{\Omega}|\nabla
v|^{2}dx)\\
&-&(q-1)\lambda t^{q-2}(\int_{\Omega}a(x)|u|^{q}dx+\int_{\Omega}a(x)|v|^{q}dx)\\
&-&(\alpha+\beta-1)t^{(\alpha+\beta-2)}(\int_{\Omega}b(x)|u|^{\alpha}|v|^{\beta}dx).\\
\end{array}\hspace{2cm}(2.5)$$
It is easy to see that $(u,v)\in M_{\lambda}(\Omega)$ if and only
if $\phi_{u,v}^{'}(1)=0$ and, more generally, that
$\phi_{u,v}^{'}(t)=0$ if and only if $(tu,tv)\in
M_{\lambda}(\Omega)$, i.e., elements in $M_{\lambda}(\Omega)$
correspond to stationary points of fibering maps. Thus it is
natural to subdivide $M_{\lambda}(\Omega)$ into sets
corresponding to local minima, local maxima and points of
inflection and so we define
$$\begin{array}{rcl}M_{\lambda}^{+}(\Omega)&=&\{(u,v)\in
M_{\lambda}(\Omega)~:~\phi_{u,v}^{''}(1)>0\},\\
M_{\lambda}^{-}(\Omega)&=&\{(u,v)\in
M_{\lambda}(\Omega)~:~\phi_{u,v}^{''}(1)<0\},\\
M_{\lambda}^{0}(\Omega)&=&\{(u,v)\in
M_{\lambda}(\Omega)~:~\phi_{u,v}^{''}(1)=0\},\\
\end{array}$$
and note that if  $(u,v)\in M_{\lambda}(\Omega)$, i.e.,
$\phi_{u,v}^{'}(1)=0$, then
$$\begin{array}{rcl}\phi_{u,v}^{''}(1)&=&(2-q)(\int_{\Omega}|\nabla u|^{2}dx+\int_{\Omega}|\nabla
v|^{2}dx)\\
&-&(\alpha+\beta-q)(\int_{\Omega}b(x)|u|^{\alpha}|v|^{\beta}dx)\\
&=&(\alpha+\beta+2)(\int_{\Omega}|\nabla u|^{2}dx+\int_{\Omega}|\nabla
v|^{2}dx)\\
&-&\lambda(q-\alpha-\beta)(\int_{\Omega}a(x)|u|^{q}dx+\int_{\Omega}a(x)|v|^{q}dx).\\
\end{array}\hspace{2cm}(2.6)$$
Also, as proved in Binding, Drabek and Huang \cite{7},Wu \cite{24} or in Brown and
Zhang \cite{10}, we have the following lemma. \begin{theorem} Suppose that
$(u_{0},v_{0})$ is a local maximum or minimum for $J_{\lambda}$ on
$M_{\lambda}(\Omega)$. Then, if $(u_{0},v_{0})\notin
M_{\lambda}^{0}(\Omega)$, $(u_{0},v_{0})$ is a critical point of
$J_{\lambda}$.
\end{theorem}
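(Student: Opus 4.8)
The plan is to apply the Lagrange multiplier rule, exploiting the fact that the Nehari manifold is precisely the zero set of the constraint functional $\Psi(u,v)=\langle J'_{\lambda}(u,v),(u,v)\rangle$ and that the quantity $\phi''_{u,v}(1)$ measures the transversality of $J'_{\lambda}$ to $M_{\lambda}(\Omega)$. First I would record that $J_{\lambda}\in C^{1}(W,\mathbb{R})$ and that $M_{\lambda}(\Omega)=\Psi^{-1}(0)$ with $\Psi$ itself of class $C^{1}$ on $W$; by $(2.1)$ this is exactly the defining relation of the manifold. Since $(u_{0},v_{0})$ is a local extremum of $J_{\lambda}$ restricted to $M_{\lambda}(\Omega)$, the classical multiplier theorem yields a scalar $\mu\in\mathbb{R}$ with
\[ J'_{\lambda}(u_{0},v_{0})=\mu\,\Psi'(u_{0},v_{0}), \]
once we know $\Psi'(u_{0},v_{0})\neq 0$, so that $M_{\lambda}(\Omega)$ is a genuine $C^{1}$ submanifold near $(u_{0},v_{0})$.

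Next I would evaluate this identity in the radial direction $(u_{0},v_{0})$. Writing $\Psi(tu_{0},tv_{0})=t\,\phi'_{u_{0},v_{0}}(t)$ and differentiating at $t=1$ gives $\langle\Psi'(u_{0},v_{0}),(u_{0},v_{0})\rangle=\phi'_{u_{0},v_{0}}(1)+\phi''_{u_{0},v_{0}}(1)=\phi''_{u_{0},v_{0}}(1)$, the final equality using $(u_{0},v_{0})\in M_{\lambda}(\Omega)$, i.e. $\phi'_{u_{0},v_{0}}(1)=0$. Pairing the multiplier identity with $(u_{0},v_{0})$ therefore produces
\[ 0=\langle J'_{\lambda}(u_{0},v_{0}),(u_{0},v_{0})\rangle=\mu\,\phi''_{u_{0},v_{0}}(1), \]
the left-hand side vanishing precisely because $(u_{0},v_{0})$ lies on the Nehari manifold.

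Finally, since $(u_{0},v_{0})\notin M_{\lambda}^{0}(\Omega)$ we have $\phi''_{u_{0},v_{0}}(1)\neq 0$, whence $\mu=0$ and $J'_{\lambda}(u_{0},v_{0})=0$, which is the claim. The same strict inequality also supplies the nondegeneracy hypothesis needed to invoke the rule: because $\langle\Psi'(u_{0},v_{0}),(u_{0},v_{0})\rangle=\phi''_{u_{0},v_{0}}(1)\neq 0$, the functional $\Psi'(u_{0},v_{0})$ cannot be the zero element, so $M_{\lambda}(\Omega)$ is indeed a manifold near $(u_{0},v_{0})$. I expect the only delicate points to be the bookkeeping that identifies $\langle\Psi'(u_{0},v_{0}),(u_{0},v_{0})\rangle$ with $\phi''_{u_{0},v_{0}}(1)$, and the verification that $J_{\lambda}$ and $\Psi$ are $C^{1}$ on $W$, which relies on the subcritical condition $\alpha+\beta<2^{*}$ together with $a\in L^{p^{*}}(\Omega)$; once these are in hand the conclusion is an immediate consequence of the multiplier rule.
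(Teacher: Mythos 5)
Your proof is correct and complete: the identity $\langle\Psi'(u_{0},v_{0}),(u_{0},v_{0})\rangle=\phi''_{u_{0},v_{0}}(1)$ (valid on $M_{\lambda}(\Omega)$, where $\phi'_{u_{0},v_{0}}(1)=0$) both justifies the nondegeneracy needed for the multiplier rule and forces $\mu=0$, exactly as you argue. The paper itself offers no argument for this statement --- it simply defers to Binding--Drabek--Huang \cite{7}, Wu \cite{24} and Brown--Zhang \cite{10} --- and your Lagrange-multiplier argument is precisely the standard proof given in those references, so you have in effect reconstructed the intended proof rather than found a different one.
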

\section{Analysis of The Fibering Maps}
It this section we give a fairly complete description of the
fibering maps associated with the system. As we shall see the
essential nature of the maps is determined by the signs of
$(\int_{\Omega}a(x)|u|^{q}dx+\int_{\Omega}a(x)|v|^{q}dx)$ and
$(\int_{\Omega}b(x)|u|^{\alpha}|v|^{\beta}dx)$ we will find it
useful to consider the function
$$\begin{array}{rcl}m_{u,v}(t)&=&t^{(2-q)}(\int_{\Omega}|\nabla u|^{2}dx+\int_{\Omega}|\nabla
v|^{2}dx)\\
&-&t^{(\alpha+\beta-q+1)}(\int_{\Omega}b(x)|u|^{\alpha}|v|^{\beta}dx).\\
\end{array}$$
Clearly, for $t>0$, $(tu,tv)\in M_{\lambda}(\Omega)$ if and only
if $t$ is a solution of
$$m_{u,v}(t)=\lambda(\int_{\Omega}a(x)|u|^{q}dx+\int_{\Omega}a(x)|v|^{q}dx).\hspace{2cm}(3.1)$$
Moreover,
$$\begin{array}{rcl}m_{u,v}^{'}(t)&=&(2-q)t^{(1-q)}(\int_{\Omega}|\nabla u|^{2}dx+\int_{\Omega}|\nabla
v|^{2}dx)\\
&-&(\alpha+\beta-q+1)t^{(\alpha+\beta-q)}(\int_{\Omega}b(x)|u|^{\alpha}|v|^{\beta}dx).\\
\end{array}\hspace{2cm}(3.2)$$
It is easy to see that $m_{u,v}$ is a strictly  increasing
function for $t\geq0$ whenever
$(\int_{\Omega}b(x)|u|^{\alpha}|v|^{\beta}dx)\leq0$ and $m_{u,v}$
is initially increasing and eventually decreasing with a single
turning point when
$(\int_{\Omega}b(x)|u|^{\alpha}|v|^{\beta}dx)>0$.\\
Suppose $(tu,tv)\in M_{\lambda}(\Omega)$. It follows from (2.6) and
(3.2) that\\ $\phi_{tu,tv}^{''}(1)=t^{q+1}m_{u,v}^{'}(t)$ and so
$(tu,tv)\in M_{\lambda}^{+}(\Omega)~(M_{\lambda}^{-}(\Omega))$
provided $m^{'}_{u,v}(t)>0~(<0)$.~We shall now describe the nature
of the fibering maps for all possible signs of
$(\int_{\Omega}b(x)|u|^{\alpha}|v|^{\beta}dx)$ and
$(\int_{\Omega}a(x)|u|^{q}dx+\int_{\Omega}a(x)|v|^{q}dx)$. If
$(\int_{\Omega}b(x)|u|^{\alpha}|v|^{\beta}dx)\\\leq0$ and
$(\int_{\Omega}a(x)|u|^{q}dx+ \int_{\Omega}a(x)|v|^{q}dx)\leq0$,
clearly $\phi_{u,v}$ is an increasing function of $t$; thus in this
case no multiple of $(u,v)$ lies in $M_{\lambda}(\Omega)$. If
$(\int_{\Omega}b(x)|u|^{\alpha}|v|^{\beta}dx)\leq0$ and
$(\int_{\Omega}a(x)|u|^{q}dx+ \int_{\Omega}a(x)|v|^{q}dx)>0$, then
$m_{u,v}$ is clear that there is exactly one solution of $u=0, v=0$
for $x\in\partial\Omega$. Thus there is a unique value $t(u,v)>0$
such that $(t(u,v)u,t(u,v)v)\in M_{\lambda}(\Omega)$. Clearly
$m_{u,v}^{'}(t(u,v))>0$ and so $(t(u,v)u,\\t(u,v)v)\in
M_{\lambda}^{+}(\Omega)$. Thus the fibering map $\phi_{u,v}$ has
a unique critical point at $t=t(u,v)$ which is a local minimum.\\
Suppose now $(\int_{\Omega}b(x)|u|^{\alpha}|v|^{\beta}dx)>0$ and
$(\int_{\Omega}a(x)|u|^{q}dx\\+\int_{\Omega}a(x)|v|^{q}dx)\leq0$. Then
$m_{u,v}$ is clear that there is exactly one positive solution of
$u=0, v=0$ for $x\in\partial\Omega$. Thus there is again a unique
value $t(u,v)>0$ such that $(t(u,v)u, t(u,v)v)\in
M_{\lambda}(\Omega)$ and since $m^{'}_{u,v}(t(u,v))<0$ in this case
$(t(u,v)u, t(u,v)v)\in M_{\lambda}^{-}(\Omega)$. Hence the fibering
map $\phi_{u,v}$ has a unique  critical point which is a local
maximum.\\Finally we consider the case
$(\int_{\Omega}b(x)|u|^{\alpha}|v|^{\beta}dx)>0$ and
$(\int_{\Omega}a(x)|u|^{q}dx+\int_{\Omega}a(x)|v|^{q}dx)>0$ where
the situation is more complicated. If $\lambda>0$ is sufficiently
large, $u=0, v=0$ for $x\in\partial\Omega$ has no solution and so
$\phi_{u,v}$ has no critical points - in this case $\phi_{u,v}$ is a
decreasing function. Hence no multiple of $(u,v)$ lies in
$M_{\lambda}(\Omega)$. If, on the other hand, $\lambda>0$ is
sufficiently small, there are exactly two solutions
$t_{1}(u,v)<t_{2}(u,v)$ of $u=0, v=0$ for $x\in\partial\Omega$ with
$m^{'}_{u,v}(t_{1}(u,v))>0$
and $m^{'}_{u,v}(t_{2}(u,v))<0$. \\
Thus there are exactly two multiples of $(u,v)\in
M_{\lambda}(\Omega)$, namely\\$(t_{1}(u,v)u,t_{1}(u,v)v)\in
M_{\lambda}^{+}(\Omega)$ and $(t_{2}(u,v)u,t_{2}(u,v)v)\in
M_{\lambda}^{-}(\Omega)$. It follows that $\phi_{u,v}$ has
exactly two critical points- a local minimum at $t=t_{1}(u,v)$
and a local maximum at $t=t_{2}(u,v)$; moreover $\phi_{u,v}$ is
decreasing in $(0,t_{1})$, increasing in $(t_{1},t_{2})$ and
decreasing in $(t_{2},\infty)$.\\
\begin{lemma} There exists
$\lambda_{1}>0$ such that, when $\lambda<\lambda_{1}$,
$\phi_{u,v}$ takes on positive values for all non-zero $(u,v)\in
W$.
\end{lemma}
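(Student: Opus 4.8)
The plan is to exploit the scaling relation $\phi_{su,sv}(t)=\phi_{u,v}(st)$, which shows that ``taking a positive value'' is invariant under replacing $(u,v)$ by any positive multiple; hence I may normalise to $\|(u,v)\|=1$. Writing $A=\int_\Omega|\nabla u|^2+|\nabla v|^2\,dx$, $B=\int_\Omega a(x)(|u|^q+|v|^q)\,dx$ and $C=\int_\Omega b(x)|u|^\alpha|v|^\beta\,dx$, so that $\phi_{u,v}(t)=\tfrac12 t^2A-\lambda\tfrac{t^q}{q}B-\tfrac{t^{\alpha+\beta}}{\alpha+\beta}C$, I first dispose of the two easy sign configurations. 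If $C\le0$, the last term is nonnegative and $\phi_{u,v}(t)\to+\infty$ as $t\to\infty$ (because $2>q$), so $\phi_{u,v}$ is eventually positive for every $\lambda>0$. If $B\le0$, the middle term is nonnegative for $\lambda>0$, so $\phi_{u,v}(t)\ge\tfrac12 t^2A-\tfrac{t^{\alpha+\beta}}{\alpha+\beta}C$, whose maximum over $t>0$ is strictly positive; again no smallness of $\lambda$ is needed. The only delicate case is $B>0$ and $C>0$.

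In that case I would test $\phi_{u,v}$ at the maximiser of its $\lambda$-independent part $g(t)=\tfrac12 t^2A-\tfrac{t^{\alpha+\beta}}{\alpha+\beta}C$. Since $A,C>0$, $g$ attains its maximum at $t^{\ast}=(A/C)^{1/(\alpha+\beta-2)}$ with
$$g(t^{\ast})=\frac{\alpha+\beta-2}{2(\alpha+\beta)}A^{\frac{\alpha+\beta}{\alpha+\beta-2}}C^{-\frac{2}{\alpha+\beta-2}}>0.$$
Then $\phi_{u,v}(t^{\ast})=g(t^{\ast})-\lambda\tfrac{(t^{\ast})^q}{q}B$, and this is positive precisely when $\lambda<\tfrac{q\,g(t^{\ast})}{(t^{\ast})^q B}$. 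A direct computation gives $\tfrac{g(t^{\ast})}{(t^{\ast})^q}=\tfrac{\alpha+\beta-2}{2(\alpha+\beta)}A^{\frac{\alpha+\beta-q}{\alpha+\beta-2}}C^{\frac{q-2}{\alpha+\beta-2}}$, and one checks that the resulting threshold $\tfrac{q\,g(t^{\ast})}{(t^{\ast})^qB}$ is invariant under $(u,v)\mapsto s(u,v)$, consistent with the normalisation above.

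To turn this into a single $\lambda_1$ valid for every $(u,v)$, I would bound $B$ and $C$ uniformly on $\{\|(u,v)\|=1\}$ via H\"older's inequality and the Sobolev embedding $W_0^{1,2}(\Omega)\hookrightarrow L^{\alpha+\beta}(\Omega)$ with best constant $S$. With $p^{*}=\frac{\alpha+\beta}{\alpha+\beta-q}$ one gets $B\le\|a\|_{L^{p^{*}}}(\|u\|_{\alpha+\beta}^q+\|v\|_{\alpha+\beta}^q)\le 2^{1-q/2}S^q\|a\|_{L^{p^{*}}}=:B_0$, while H\"older with exponents $\tfrac{\alpha+\beta}{\alpha}$, $\tfrac{\alpha+\beta}{\beta}$ gives $C\le S^{\alpha+\beta}\|b\|_{L^\infty}=:C_0$. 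Since the exponent $\frac{q-2}{\alpha+\beta-2}$ is negative, on the normalised set ($A=1$) the quantity $\tfrac{g(t^{\ast})}{(t^{\ast})^q}=\tfrac{\alpha+\beta-2}{2(\alpha+\beta)}C^{\frac{q-2}{\alpha+\beta-2}}$ is minimised at $C=C_0$; combining with $B\le B_0$ yields the explicit choice
$$\lambda_1=\frac{q(\alpha+\beta-2)}{2(\alpha+\beta)B_0}\,C_0^{\frac{q-2}{\alpha+\beta-2}}.$$
For $\lambda<\lambda_1$ the inequality $\phi_{u,v}(t^{\ast})>0$ then holds for all normalised $(u,v)$ with $B,C>0$, and by scaling for all nonzero $(u,v)$ in that case, the two easy cases having been settled already. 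The main obstacle is exactly this uniformity step: one must ensure the per-point threshold does not degenerate as $C\to0^{+}$ (where $t^{\ast}\to\infty$), which is why I evaluate the ratio $g(t^{\ast})/(t^{\ast})^q$ rather than $g(t^{\ast})$ itself and use the negative sign of the exponent to reduce to the single worst case $C=C_0$.
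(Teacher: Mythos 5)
Your proof is correct and follows essentially the same route as the paper: the paper likewise tests $\phi_{u,v}$ at the maximiser $t_{\max}=t^{\ast}$ of the $\lambda$-independent part $h_{u,v}(t)=\frac{t^{2}}{2}A-\frac{t^{\alpha+\beta}}{\alpha+\beta}C$ (your $g$), handles $C\leq 0$ separately, and extracts a uniform $\lambda_{1}$ from H\"older and the Sobolev embedding. The only difference is bookkeeping: you make the scale invariance explicit and reduce to the sphere $\|(u,v)\|=1$ with upper bounds $B\leq B_{0}$, $C\leq C_{0}$, whereas the paper keeps $(u,v)$ general and absorbs the homogeneity through the factorisation $\phi_{u,v}(t_{\max})\geq h_{u,v}(t_{\max})^{q/2}\bigl(h_{u,v}(t_{\max})^{(2-q)/2}-\lambda c\bigr)$ together with the uniform lower bound $h_{u,v}(t_{\max})\geq\delta>0$.
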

\begin{proof}[\bf Proof]
If $(\int_{\Omega}b(x)|u|^{\alpha}|v|^{\beta}dx)\leq0$, then
$\phi_{u,v}(t)>0$ for $t$ sufficiently large. Suppose $(u,v)\in
W$ and $(\int_{\Omega}b(x)|u|^{\alpha}|v|^{\beta}dx)>0$. Let
$$\begin{array}{rcl}h_{u,v}(t)&=&\frac{t^{2}}{2}(\int_{\Omega}|\nabla u|^{2}dx+\int_{\Omega}|\nabla
v|^{2}dx)\\\\
&-&\frac{t^{(\alpha+\beta)}}{\alpha+\beta}(\int_{\Omega}b(x)|u|^{\alpha}|v|^{\beta}dx).\\
\end{array}$$
Then elementary calculus shows that $h_{u,v}$ takes on a maximum
value of
$$\frac{\alpha+\beta-2}{2q}\{\frac{(\int_{\Omega}|\nabla u|^{2}dx+\int_{\Omega}|\nabla
v|^{2}dx)^{(\alpha+\beta)}}{(\int_{\Omega}b(x)|u|^{\alpha}|v|^{\beta}dx)^{2}}\}^{\frac{1}{\alpha+\beta-2}}$$
when $$t=t_{\max}=(\frac{\int_{\Omega}|\nabla
u|^{2}dx+\int_{\Omega}|\nabla
v|^{2}dx}{\int_{\Omega}b(x)|u|^{\alpha}dx+\int_{\Omega}b(x)|v|^{\beta}dx})^{\frac{1}{\alpha+\beta-2}}.$$
However
$$\frac{(\int_{\Omega}|\nabla u|^{2}dx+\int_{\Omega}|\nabla
v|^{2}dx)^{(\alpha+\beta)}}{(\int_{\Omega}|u|^{\alpha}|v|^{\beta}dx)^{2}}\geq\frac{1}{S_{\alpha+\beta}^{2(\alpha+\beta)}}$$

where $S_{\alpha+\beta}$ denotes the Sobolev constant of the
embedding of $W$ into $L=L^{\alpha+\beta}(\Omega)\times
L^{\alpha+\beta}(\Omega)$.
$$h_{u,v}(t_{\max})\geq\frac{\alpha+\beta-2}{2(\alpha+\beta)}(\frac{1}{||b^{+}||^{2}_{\infty}S_{\alpha+\beta}^{2(\alpha+\beta)}})^{\frac{1}{\alpha+\beta-2}}=\delta$$
where $\delta$ is independent of $u$ and $v$.\\
We shall now show that there exists $\lambda_{1}>0$ such that
$\phi_{u,v}(t_{\max})>0$, i.e.,
$$h_{u,v}(t_{\max})-\frac{\lambda(t_{\max})^{q}}{q}(\int_{\Omega}a(x)|u|^{q}dx+\int_{\Omega}a(x)|v|^{q}dx)>0$$
for all $(u,v)\in W-\{(0,0)\}$ provided $\lambda<\lambda_{1}$. We
have
$$\begin{array}{rcl}&&\frac{(t_{\max})^{q}}{q}(\int_{\Omega}a(x)|u|^{q}dx+\int_{\Omega}a(x)|v|^{q}dx)\\\\
&\leq&\frac{1}{q}||a||_{\infty}S_{q}^{q}(\frac{\int_{\Omega}|\nabla u|^{2}dx+\int_{\Omega}|\nabla
v|^{2}dx}{\int_{\Omega}b(x)|u|^{\alpha}|v|^{\beta}dx})^{\frac{q}{\alpha+\beta-2}}\\\\
&&(\int_{\Omega}|\nabla u|^{2}dx+\int_{\Omega}|\nabla
v|^{2}dx)^{\frac{q}{2}}\\\\
&=&\frac{1}{q}||a||_{\infty}S_{q}^{q}\{\frac{(\int_{\Omega}|\nabla u|^{2}dx+\int_{\Omega}|\nabla
v|^{2}dx)^{(\alpha+\beta)}}{(\int_{\Omega}b(x)|u|^{\alpha}|v|^{\beta}dx)^{2}}\}^{\frac{q-2}{2(\alpha+\beta-2)}}\\\\
&=&\frac{1}{q}||a||_{\infty}S_{q}^{q}[\frac{2(\alpha+\beta)}{\alpha+\beta-2}]^{\frac{q}{2}}h_{u,v}(t_{\max})^{\frac{q}{2}}\\\\
&=&ch_{u,v}(t_{\max})^{\frac{q}{2}}\\\\
\end{array}$$
where $c$ is independent of $u$ and $v$. Hence
$$\begin{array}{rcl}\phi_{u,v}(t_{\max})&\geq&h_{u,v}(t_{\max})-\lambda
ch_{u,v}(t_{\max})^{\frac{q}{2}}\\\\
&=&h_{u,v}(t_{\max})^{\frac{q}{2}}(h_{u,v}(t_{\max})^{\frac{2-q}{2}}-\lambda
c)\\\\
\end{array}$$
and so, since $h_{u,v}(t_{\max})\geq\delta$ for all $(u,v)\in
W-\{(0,0)\}$, it follows that $\phi_{u,v}(t_{\max})\geq0$ for all
non-zero $(u,v)$ provided
$\lambda<\delta^{\frac{2-q}{2}}/2c=\lambda_{1}$. This completes
the proof.
\end{proof}
It follows from the above lemma that when $\lambda<\lambda_{1}$,
$(\int_{\Omega}a(x)|u|^{q}dx+\int_{\Omega}a(x)|v|^{q}dx)\\>0$ and
$(\int_{\Omega}b(x)|u|^{\alpha}|v|^{\beta}dx)>0$ then $\phi_{u,v}$
must have exactly two critical points as
discussed in the remarks preceding the lemma.\\
Thus when $\lambda<\lambda_{1}$ we have obtained a complete
knowledge of the number of critical points of $\phi_{u,v}$, of
the intervals on which $\phi_{u,v}$ is increasing and decreasing
and of the multiples of $(u,v)$ which lie in
$M_{\lambda}(\Omega)$ for every possible choice of signs of
$(\int_{\Omega}b(x)|u|^{\alpha}|v|^{\beta}dx)$ and
$(\int_{\Omega}a(x)|u|^{q}dx+\int_{\Omega}a(x)|v|^{q}dx)$. In
particular we have the following result.
\begin{corollary}
$M_{\lambda}^{0}(\Omega)=\emptyset$ when $0<\lambda<\lambda_{1}$.
\end{corollary}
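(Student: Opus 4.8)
The plan is to argue by contradiction and to reduce everything to the single sign‑configuration in which $M_\lambda^{0}(\Omega)$ could conceivably be nonempty, namely the delicate case $B(u,v)>0$, $A(u,v)>0$ treated just before the Corollary. Throughout, abbreviate $A(u,v)=\int_\Omega a(x)(|u|^{q}+|v|^{q})\,dx$ and $B(u,v)=\int_\Omega b(x)|u|^{\alpha}|v|^{\beta}\,dx$. Suppose, contrary to the claim, that there is a nonzero $(u,v)\in M_\lambda^{0}(\Omega)$ for some $\lambda\in(0,\lambda_1)$.

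First I would pin down the signs of $A(u,v)$ and $B(u,v)$ using the two expressions for $\phi_{u,v}^{''}(1)$ that follow from $\phi_{u,v}^{'}(1)=0$ in $(2.6)$. Since $(u,v)\in M_\lambda^{0}(\Omega)$ means $\phi_{u,v}^{''}(1)=0$, the first form gives $(2-q)\|(u,v)\|^{2}=(\alpha+\beta-q)B(u,v)$ and the second gives $\lambda(\alpha+\beta-q)A(u,v)=(\alpha+\beta-2)\|(u,v)\|^{2}$. Because $1<q<2<\alpha+\beta$ and $(u,v)\neq(0,0)$, both right-hand sides are strictly positive, so $B(u,v)>0$ and $A(u,v)>0$; thus $(u,v)$ lies in precisely the configuration analysed in the paragraph preceding the Lemma.

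Next I would exploit the identity $\phi_{tu,tv}^{''}(1)=t^{q+1}m_{u,v}^{'}(t)$ together with membership in the Nehari manifold. Membership $(u,v)\in M_\lambda(\Omega)$ gives, via $(3.1)$, $m_{u,v}(1)=\lambda A(u,v)$, while $\phi_{u,v}^{''}(1)=0$ gives $m_{u,v}^{'}(1)=0$. Since $B(u,v)>0$, the function $m_{u,v}$ is increasing and then decreasing with a single turning point, so that turning point occurs at $t=1$ and $\lambda A(u,v)=m_{u,v}(1)=\max_{t>0}m_{u,v}(t)$. Consequently $m_{u,v}(t)=\lambda A(u,v)$ has $t=1$ as its only solution, and $m_{u,v}(t)<\lambda A(u,v)$ for $t\neq 1$; by the correspondence in $(3.1)$ the fibering map $\phi_{u,v}$ then has $t=1$ as its only critical point, with $\phi_{u,v}^{'}(t)\le 0$ for all $t>0$ and equality only at $t=1$. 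As $\phi_{u,v}(0)=0$, this yields $\phi_{u,v}(t)\le 0$ for every $t\ge 0$, in particular $\phi_{u,v}(t_{\max})\le 0$. But the Lemma, applied in the present subcase $B(u,v)>0$ and with $\lambda<\lambda_1$, gives $\phi_{u,v}(t_{\max})>0$, a contradiction. Hence no nonzero element of $M_\lambda^{0}(\Omega)$ exists, i.e.\ $M_\lambda^{0}(\Omega)=\emptyset$ for $0<\lambda<\lambda_1$.

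The step I expect to be the \emph{main obstacle} is the third paragraph: rigorously passing from the single–turning–point property of $m_{u,v}$ (when $B(u,v)>0$) to the global sign statement $\phi_{u,v}^{'}\le 0$ on $(0,\infty)$, and using the correspondence between critical points of $\phi_{u,v}$ and solutions of $m_{u,v}(t)=\lambda A(u,v)$ with the correct strict inequalities. Once that monotonicity is secured, the clash with the lower bound $\phi_{u,v}(t_{\max})>0$ supplied by the Lemma is immediate, and the sign determination of the second paragraph is a direct reading of $(2.6)$.
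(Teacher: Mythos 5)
Your proof is correct and follows essentially the same route as the paper: the corollary there is exactly the consequence of the fibering-map case analysis together with Lemma 3.1, which excludes the degenerate tangency (where $m_{u,v}$ attains its maximum value $\lambda\int_{\Omega}a(x)(|u|^{q}+|v|^{q})\,dx$ at the critical point) in the only possible sign configuration $\int_{\Omega}b(x)|u|^{\alpha}|v|^{\beta}\,dx>0$, $\int_{\Omega}a(x)(|u|^{q}+|v|^{q})\,dx>0$. Your contradiction argument simply makes explicit, via the two forms of $(2.6)$, what the paper leaves implicit in the discussion preceding the corollary.
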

\begin{corollary} If $\lambda<\lambda_{1}$, then there exists
$\delta_{1}>0$ such that $J_{\lambda}(u,v)\geq\delta_{1}$ for all
$(u,v)\in M_{\lambda}^{-}(\Omega)$.
\end{corollary}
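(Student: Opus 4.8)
The plan is to avoid estimating $J_{\lambda}$ from scratch and instead inherit a positive lower bound directly from the description of the fibering maps together with the estimate already obtained in Lemma 3.1. First I would fix $(u,v)\in M_{\lambda}^{-}(\Omega)$, so that by definition $\phi_{u,v}^{''}(1)<0$. Reading this through (2.6) gives $(2-q)\|(u,v)\|^{2}<(\alpha+\beta-q)\int_{\Omega}b(x)|u|^{\alpha}|v|^{\beta}dx$; since $2-q>0$ the left-hand side is strictly positive and $\alpha+\beta-q>0$, so necessarily $\int_{\Omega}b(x)|u|^{\alpha}|v|^{\beta}dx>0$. Hence $(u,v)$ falls under one of the two cases analysed at the start of Section 3 in which the convex term is positive: either $\int_{\Omega}a(x)|u|^{q}dx+\int_{\Omega}a(x)|v|^{q}dx\le 0$, where $\phi_{u,v}$ has a unique critical point, or that quantity is $>0$ with $\lambda<\lambda_{1}$, where $\phi_{u,v}$ has exactly two critical points $t_{1}<t_{2}$ with $t_{2}$ a local maximum.

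The key observation I would use is that in either case $t=1$ is precisely the critical point at which $\phi_{u,v}$ attains its global maximum on $(0,\infty)$: in the first case it is the only critical point, and since $\phi_{u,v}(0)=0$ with $\phi_{u,v}(t)\to-\infty$ as $t\to\infty$ (because $\int_{\Omega}b(x)|u|^{\alpha}|v|^{\beta}dx>0$), the unique turning point is a global maximum; in the second case the decreasing–increasing–decreasing shape described in Section 3 forces the global maximum at $t_{2}=1$. Because $\phi_{u,v}(1)=J_{\lambda}(u,v)$, this identifies $J_{\lambda}(u,v)=\max_{t>0}\phi_{u,v}(t)$. In particular, evaluating at the maximiser $t_{\max}$ of the auxiliary map $h_{u,v}$ from Lemma 3.1 yields $J_{\lambda}(u,v)\ge \phi_{u,v}(t_{\max})$.

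It then remains to upgrade the nonnegativity of Lemma 3.1 to a uniform positive bound. From the penultimate display in its proof, for $\lambda<\lambda_{1}=\delta^{(2-q)/2}/2c$ one has $\phi_{u,v}(t_{\max})\ge h_{u,v}(t_{\max})^{q/2}\bigl(h_{u,v}(t_{\max})^{(2-q)/2}-\lambda c\bigr)$, where $h_{u,v}(t_{\max})\ge\delta$ uniformly in $(u,v)$. Since $q<2$ and $\lambda c<\delta^{(2-q)/2}/2$, the function $s\mapsto s-\lambda c\,s^{q/2}$ is increasing for $s\ge\delta$, so evaluating at $s=\delta$ gives $\phi_{u,v}(t_{\max})\ge \delta-\lambda c\,\delta^{q/2}>\delta-\lambda_{1}c\,\delta^{q/2}=\delta/2$. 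Taking $\delta_{1}=\delta/2$, which is independent of $(u,v)$, completes the argument. The step I expect to be the main obstacle is the clean justification that $t=1$ realises the \emph{global} maximum of $\phi_{u,v}$, i.e.\ the identification $J_{\lambda}(u,v)=\max_{t>0}\phi_{u,v}(t)$; once that is in place the positivity is purely a consequence of Lemma 3.1 and needs no new weight estimates. A more computational alternative would be to derive a uniform lower bound $\|(u,v)\|\ge\Lambda_{0}>0$ on $M_{\lambda}^{-}(\Omega)$ from the displayed inequality and the Sobolev embedding, and then substitute into the second form of (2.2); this also works but requires possibly shrinking $\lambda_{1}$, so I would prefer the fibering-map route above.
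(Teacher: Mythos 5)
Your proposal is correct and takes essentially the same route as the paper: the paper's proof likewise notes that $(u,v)\in M_{\lambda}^{-}(\Omega)$ forces $\int_{\Omega}b(x)|u|^{\alpha}|v|^{\beta}dx>0$ and that $\phi_{u,v}$ has its positive global maximum at $t=1$, and then chains $J_{\lambda}(u,v)=\phi_{u,v}(1)\geq\phi_{u,v}(t_{\max})\geq h_{u,v}(t_{\max})^{\frac{q}{2}}\bigl(h_{u,v}(t_{\max})^{\frac{2-q}{2}}-\lambda c\bigr)\geq\delta^{\frac{q}{2}}\bigl(\delta^{\frac{2-q}{2}}-\lambda c\bigr)\geq\delta/2$ using the quantities $\delta$ and $c$ from Lemma 3.1. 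Your additional justifications (deducing the sign of the $b$-integral from (2.6), the case analysis identifying $t=1$ as the global maximum, and the monotonicity argument for the final bound) simply make explicit details the paper asserts without proof.
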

\begin{proof}[\bf Proof]
Consider $(u,v)\in M_{\lambda}^{-}(\Omega)$. Then $\phi_{u,v}$
has a positive global maximum at $t=1$ and
$(\int_{\Omega}b(x)|u|^{\alpha}|v|^{\beta}dx)>0$. Thus
$$\begin{array}{rcl}J_{\lambda}(u,v)&=&\phi_{u,v}(1)\geq\phi_{u,v}(t_{\max})\\\\
&\geq&h_{u,v}(t_{\max})^{\frac{q}{2}}(h_{u,v}(t_{\max})^{\frac{2-q}{2}}-\lambda c)\\\\
&\geq&\delta^{\frac{q}{2}}(\delta^{\frac{2-q}{2}}-\lambda
c)\\\\
\end{array}$$
and the left hand side is uniformly bounded away from $0$
provided that $\lambda<\lambda_{1}$.
\end{proof}

\section{Existence of Positive Solutions}
In this section using the properties of fibering maps we shall
give simple proofs of the existence of two positive solutions,
one in $M^{+}_{\lambda}(\Omega)$ and one in
$M^{-}_{\lambda}(\Omega)$.
\begin{theorem} If
$\lambda<\lambda_{1}$, there exists a minimizer of $J_{\lambda}$
on $M_{\lambda}^{+}(\Omega)$.
\end{theorem}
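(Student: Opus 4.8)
The plan is to apply the direct method of the calculus of variations on the submanifold $M_\lambda^+(\Omega)$, following the Brown--Wu scheme and exploiting the complete fibering-map description from Section 3. Set $c^+=\inf\{J_\lambda(u,v):(u,v)\in M_\lambda^+(\Omega)\}$. Since $M_\lambda^+(\Omega)\subseteq M_\lambda(\Omega)$, Theorem 2.1 shows $J_\lambda$ is bounded below on $M_\lambda^+(\Omega)$, so $c^+$ is finite. First I would verify $c^+<0$: choosing any $(u,v)\in W$ with $\int_\Omega a(x)|u|^q\,dx+\int_\Omega a(x)|v|^q\,dx>0$ (possible because $a$ is somewhere positive), the fibering map $\phi_{u,v}$ satisfies $\phi_{u,v}(t)<0$ for small $t>0$, since the term $-\lambda t^q q^{-1}(\int_\Omega a|u|^q\,dx+\int_\Omega a|v|^q\,dx)$ dominates as $q<2<\alpha+\beta$; hence its local minimum value, attained at $t_1(u,v)$ with $(t_1u,t_1v)\in M_\lambda^+(\Omega)$, is negative, giving $c^+<0$.

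Next I would take a minimizing sequence $(u_n,v_n)\in M_\lambda^+(\Omega)$ with $J_\lambda(u_n,v_n)\to c^+$. Coercivity (Theorem 2.1) makes it bounded in $W$, so after passing to a subsequence $(u_n,v_n)\rightharpoonup(u_0,v_0)$ weakly in $W$, and by the compact Sobolev embeddings ($q<2^*$ and $\alpha+\beta<2^*$) strongly in $L^q\times L^q$ and $L^{\alpha+\beta}\times L^{\alpha+\beta}$, so that the lower-order integrals pass to the limit. Using the second identity in (2.2) together with $c^+<0$, I would show $\int_\Omega a(x)|u_0|^q\,dx+\int_\Omega a(x)|v_0|^q\,dx>0$: indeed $\lambda(\tfrac1q-\tfrac{1}{\alpha+\beta})(\int_\Omega a|u_n|^q\,dx+\int_\Omega a|v_n|^q\,dx)=(\tfrac12-\tfrac{1}{\alpha+\beta})\|(u_n,v_n)\|^2-J_\lambda(u_n,v_n)\ge -J_\lambda(u_n,v_n)\to -c^+>0$, and the left side converges by strong $L^q$-convergence. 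In particular $(u_0,v_0)\neq(0,0)$, and by the Section 3 analysis there is a unique $t^+=t^+(u_0,v_0)>0$ with $(t^+u_0,t^+v_0)\in M_\lambda^+(\Omega)$, whence $J_\lambda(t^+u_0,t^+v_0)\ge c^+$.

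The heart of the argument is to prove $(u_n,v_n)\to(u_0,v_0)$ strongly in $W$. I would argue by contradiction: if not, then by weak lower semicontinuity $\|(u_0,v_0)\|^2<\lim_n\|(u_n,v_n)\|^2=:L$, so for every $t>0$ one has $\phi_{u_0,v_0}(t)<\widetilde\phi(t):=\lim_n\phi_{u_n,v_n}(t)$, the comparison map obtained by replacing $\|(u_0,v_0)\|^2$ by $L$ while keeping the lower-order coefficients of $(u_0,v_0)$. Because each $(u_n,v_n)\in M_\lambda^+(\Omega)$ has $t=1$ as its local-minimum critical point, $\widetilde\phi$ inherits $\widetilde\phi\,'(1)=0$, $\widetilde\phi\,''(1)\ge 0$ and $\widetilde\phi(1)=c^+$, i.e. $t=1$ is the (local-minimum) critical point of $\widetilde\phi$. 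Comparing the critical-point equations of $\phi_{u_0,v_0}$ and $\widetilde\phi$ then forces $t^+(u_0,v_0)>1$, so $1$ lies strictly below the local maximum of $\phi_{u_0,v_0}$; hence $\phi_{u_0,v_0}(t^+)\le\phi_{u_0,v_0}(1)<\widetilde\phi(1)=c^+$, contradicting $J_\lambda(t^+u_0,t^+v_0)=\phi_{u_0,v_0}(t^+)\ge c^+$. When $\int_\Omega b(x)|u_0|^\alpha|v_0|^\beta\,dx\le 0$, the map $\phi_{u_0,v_0}$ has a single global minimum and the same inequality is immediate.

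I expect this strong-convergence step to be the main obstacle, precisely in the case $\int_\Omega b(x)|u_0|^\alpha|v_0|^\beta\,dx>0$, where $\phi_{u_0,v_0}$ has two critical points and one must rule out that $t=1$ lies on the far decreasing branch beyond the local maximum. The device of comparing with the limiting fibering map $\widetilde\phi$, whose minimizing critical point is exactly $t=1$, is what forces $t^+(u_0,v_0)>1$ and thereby resolves this case. Once strong convergence is secured, $(u_0,v_0)\in M_\lambda(\Omega)$ and $J_\lambda(u_0,v_0)=c^+<0$; since $J_\lambda\ge\delta_1>0$ on $M_\lambda^-(\Omega)$ by Corollary 3.3 and $M_\lambda^0(\Omega)=\emptyset$ for $\lambda<\lambda_1$ by Corollary 3.2, the minimizer must lie in $M_\lambda^+(\Omega)$, completing the proof.
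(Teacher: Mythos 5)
Your proposal is correct and follows essentially the same route as the paper: a minimizing sequence made bounded by coercivity, weak convergence in $W$ plus compact convergence of the lower-order integrals, negativity of the infimum forcing $\int_\Omega a(x)|u_0|^q\,dx+\int_\Omega a(x)|v_0|^q\,dx>0$, and then a fibering-map comparison under the assumption of non-strong convergence showing the local-minimum point of $\phi_{u_0,v_0}$ lies beyond $t=1$, which yields the contradiction via monotonicity of $\phi_{u_0,v_0}$ on $(0,t^+)$ and strict weak lower semicontinuity. Your comparison map $\widetilde\phi$ is exactly the pointwise limit $\lim_n\phi_{u_n,v_n}$ that the paper manipulates directly (through $\phi'_{u_n,v_n}(t_0)>0$ for large $n$ together with $\phi'_{u_n,v_n}<0$ on $(0,1)$, forcing $t_0>1$), so the two arguments coincide in substance.
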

\begin{proof}[\bf Proof]
Since $J_{\lambda}$ is bounded below on $M_{\lambda}(\Omega)$ and
so on $M_{\lambda}^{+}(\Omega)$, there exists a minimizing
sequence $\{(u_{n},v_{n})\}\subseteq M_{\lambda}^{+}(\Omega)$
such that
$$\displaystyle\lim_{n\rightarrow\infty}J_{\lambda}(u_{n},v_{n})=\displaystyle\inf_{(u,v)\in M_{\lambda}^{+}(\Omega)}J_{\lambda}(u,v).$$
Since $J_{\lambda}$ is coercive, $\{(u_{n},v_{n})\}$ is bounded
in $W$. Thus we may assume, without loss of generality, that
$(u_{n},v_{n})\rightarrow(u_{0},v_{0})$ in $W$ and
$(u_{n},v_{n})\rightarrow(u_{0},v_{0})$ in $L^{r}(\Omega)\times
L^{r}(\Omega)$ for $1<r<\frac{2N}{N-2}$.\\
If we choose $(u,v)\in W$ such that
$(\int_{\Omega}a(x)|u|^{q}dx+\int_{\Omega}a(x)|v|^{q}dx)>0$, then
there exist $t_{1}(u,v)$ such that $(t_{1}(u,v)u,t_{1}(u,v)v)\in
M_{\lambda}^{+}(\Omega)$ and\\
$J_{\lambda}(t_{1}(u,v)u,t_{1}(u,v)v)<0$. Hence,
$\displaystyle\inf_{(u,v)\in M_{\lambda}^{+}(\Omega)
}J_{\lambda}(u,v)<0$. By (2.2),
$$\begin{array}{rcl}J_{\lambda}(u_{n},v_{n})&=&(\frac{1}{2}-\frac{1}{\alpha+\beta})(\int_{\Omega}|\nabla
u_{n}|^{2}dx+\int_{\Omega}|\nabla v_{n}|^{2}dx)\\
&-&\lambda(\frac{1}{\alpha+\beta}-\frac{1}{q})(\int_{\Omega}a(x)|u_{n}|^{q}dx+\int_{\Omega}a(x)|v_{n}|^{q}dx)\\
\end{array}$$ and so
$$\begin{array}{rcl}&&\lambda(\frac{1}{q}-\frac{1}{\alpha+\beta})(\int_{\Omega}a(x)|u_{n}|^{q}dx+\int_{\Omega}a(x)|v_{n}|^{q}dx)=\\
&&(\frac{1}{2}-\frac{1}{\alpha+\beta})(\int_{\Omega}|\nabla
u_{n}|^{2}dx+\int_{\Omega}|\nabla v_{n}|^{2}dx)
-J_{\lambda}(u_{n},v_{n}).\\
\end{array}$$ Letting $n\rightarrow\infty$, we see that
$(\int_{\Omega}a(x)|u_{0}|^{q}dx+\int_{\Omega}a(x)|v_{0}|^{q}dx)>0$.\\

Suppose $(u_{n},v_{n})\nrightarrow(u_{0},v_{0})$ in $W$. We shall
obtain a contradiction by discussing the fibering map
$\phi_{u_{0},v_{0}}$. Hence there exists $t_{0}>0$ such that
$(t_{0}u_{0},t_{0}v_{0})\in M_{\lambda}^{+}(\Omega)$ and
$\phi_{u_{0},v_{0}}$ is decreasing
on $(0,t_{0})$ with $\phi_{u_{0},v_{0}}^{'}(t_{0})=0$.\\
Since $(u_{n},v_{n})\nrightarrow(u_{0},v_{0})$ in $W$,
$$(\int_{\Omega}|\nabla
u_{0}|^{2}dx+\int_{\Omega}|\nabla v_{0}|^{2}dx)<\displaystyle\liminf_{n\rightarrow\infty}(\int_{\Omega}|\nabla
u_{n}|^{2}dx+\int_{\Omega}|\nabla v_{n}|^{2}dx).$$ Thus, as
$$\begin{array}{rcl}\phi_{u_{n},v_{n}}^{'}(t)&=&t(\int_{\Omega}|\nabla
u_{n}|^{2}dx+\int_{\Omega}|\nabla v_{n}|^{2}dx)\\
&-&\lambda t^{q-1}(\int_{\Omega}a(x)|u_{n}|^{q}dx+\int_{\Omega}a(x)|v_{n}|^{q}dx)\\
&-&t^{\alpha+\beta-1}(\int_{\Omega}b(x)|u_{n}|^{\alpha}|v_{n}|^{\beta}dx)\\
\end{array}$$
and
$$\begin{array}{rcl}\phi_{u_{0},v_{0}}^{'}(t)&=&t(\int_{\Omega}|\nabla
u_{0}|^{2}dx+\int_{\Omega}|\nabla v_{0}|^{2}dx)\\
&-&\lambda t^{q-1}(\int_{\Omega}a(x)|u_{0}|^{q}dx+\int_{\Omega}a(x)|v_{0}|^{q}dx)\\
&-&t^{\alpha+\beta-1}(\int_{\Omega}b(x)|u_{0}|^{\alpha}|v_{0}|^{\beta}dx).\\
\end{array}$$
It follows that $\phi_{u_{n},v_{n}}^{'}(t_{0})>0$ for $n$
sufficiently large. Since $\{(u_{n},v_{n})\}\subseteq
M_{\lambda}^{+}(\Omega)$, by considering the possible fibering
maps it is easy to see that $\phi_{u_{n},v_{n}}^{'}(t)<0$ for
$0<t<1$ and $\phi_{u_{n},v_{n}}^{'}(1)=0$ for all $n$. Hence we
must have $t_{0}>1$. But  $(t_{0}u_{0},t_{0}v_{0})\in
M_{\lambda}^{+}(\Omega)$ and so
$$J_{\lambda}(t_{0}u_{0},t_{0}v_{0})<J_{\lambda}(u_{0},v_{0})<\displaystyle\lim_{n\rightarrow\infty}J_{\lambda}(u_{n},v_{n})=\displaystyle\inf_{(u,v)\in
M_{\lambda}^{+}(\Omega)}J_{\lambda}(u,v)$$ and this is a
contradiction. Hence $(u_{n},v_{n})\rightarrow(u_{0},v_{0})$ in
$W$ and so
$$J_{\lambda}(u_{0},v_{0})=\displaystyle\lim_{n\rightarrow\infty}J_{\lambda}(u_{n},v_{n})=\displaystyle\inf_{(u,v)\in
M_{\lambda}^{+}(\Omega)}J_{\lambda}(u,v).$$ Thus $(u_{0},v_{0})$
is a minimizer for $J_{\lambda}$ on $M_{\lambda}^{+}(\Omega)$.
\end{proof}
\begin{theorem} If $\lambda<\lambda_{1}$, then exists a minimizer of
$J_{\lambda}$ on $M_{\lambda}^{-}(\Omega)$.
\end{theorem}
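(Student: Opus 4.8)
The plan is to mirror the proof of Theorem 4.1, replacing the ``local minimum'' structure of $M_\lambda^+(\Omega)$ by the ``local maximum'' structure of $M_\lambda^-(\Omega)$, and using Corollary 3.3 in place of the negativity of the infimum. First I would note that by Theorem 2.1 and Corollary 3.3 the functional $J_\lambda$ is bounded below on $M_\lambda^-(\Omega)$ by $\delta_1>0$, so I may choose a minimizing sequence $\{(u_n,v_n)\}\subseteq M_\lambda^-(\Omega)$ with $J_\lambda(u_n,v_n)\to\inf_{M_\lambda^-(\Omega)}J_\lambda\geq\delta_1$. Coercivity (Theorem 2.1) gives boundedness in $W$, so after passing to a subsequence $(u_n,v_n)\rightharpoonup(u_0,v_0)$ weakly in $W$ and, since $q<2<\alpha+\beta<2^{*}$, strongly in $L^{r}(\Omega)\times L^{r}(\Omega)$ for the relevant subcritical exponents; in particular $\int_\Omega a|u_n|^{q}\,dx+\int_\Omega a|v_n|^{q}\,dx$ and $\int_\Omega b|u_n|^{\alpha}|v_n|^{\beta}\,dx$ converge to the corresponding integrals for $(u_0,v_0)$.

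The first substantive step is to show that $\int_\Omega b(x)|u_0|^{\alpha}|v_0|^{\beta}\,dx>0$, so that $(u_0,v_0)$ genuinely admits an $M_\lambda^-(\Omega)$-multiple. Since each $(u_n,v_n)\in M_\lambda^-(\Omega)$, the inequality $\phi_{u_n,v_n}^{''}(1)<0$ together with (2.6) yields $(2-q)\|(u_n,v_n)\|^{2}<(\alpha+\beta-q)\int_\Omega b|u_n|^{\alpha}|v_n|^{\beta}\,dx$, and a Sobolev estimate of the right-hand side forces $\|(u_n,v_n)\|$ to be bounded below by a positive constant; feeding this back into the same inequality bounds $\int_\Omega b|u_n|^{\alpha}|v_n|^{\beta}\,dx$ away from $0$. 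By the strong $L^{\alpha+\beta}$ convergence the limit $\int_\Omega b|u_0|^{\alpha}|v_0|^{\beta}\,dx$ inherits this positive lower bound. Consequently, by the analysis of the fibering maps in Section 3 (the cases $\int b>0$), there is a unique $t^{-}=t^{-}(u_0,v_0)>0$ with $(t^{-}u_0,t^{-}v_0)\in M_\lambda^-(\Omega)$, at which $\phi_{u_0,v_0}$ attains its maximum.

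The main step is to upgrade weak convergence to strong convergence in $W$. I would argue by contradiction: if $(u_n,v_n)\nrightarrow(u_0,v_0)$, then $\|(u_0,v_0)\|^{2}<\liminf_n\|(u_n,v_n)\|^{2}$ strictly, while the lower-order integrals pass to the limit, so (passing to a further subsequence so that $\|(u_n,v_n)\|^{2}$ converges) $\phi_{u_n,v_n}(t)\to\psi(t)>\phi_{u_0,v_0}(t)$ for every $t>0$, where $\psi$ is the fibering map obtained by replacing $\|(u_0,v_0)\|^{2}$ with $\lim_n\|(u_n,v_n)\|^{2}$. The decisive observation, which is the analogue for $M_\lambda^-(\Omega)$ of the relation $t_0>1$ used for $M_\lambda^+(\Omega)$, is that for $(u_n,v_n)\in M_\lambda^-(\Omega)$ the point $t=1$ is the global maximum of $\phi_{u_n,v_n}$ on $(0,\infty)$; here I would invoke Corollary 3.3 to rule out that the value $0=\lim_{t\to0^{+}}\phi_{u_n,v_n}(t)$ competes with $\phi_{u_n,v_n}(1)\geq\delta_1$. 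Hence $J_\lambda(u_n,v_n)=\phi_{u_n,v_n}(1)\geq\phi_{u_n,v_n}(t^{-})$ for the fixed $t^{-}$ above, and letting $n\to\infty$ gives $\inf_{M_\lambda^-(\Omega)}J_\lambda\geq\psi(t^{-})>\phi_{u_0,v_0}(t^{-})=J_\lambda(t^{-}u_0,t^{-}v_0)\geq\inf_{M_\lambda^-(\Omega)}J_\lambda$, a contradiction. Therefore $(u_n,v_n)\to(u_0,v_0)$ in $W$.

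Finally, strong convergence makes every integral pass to the limit, so $\phi_{u_0,v_0}^{'}(1)=0$ and $(u_0,v_0)\in M_\lambda(\Omega)$ with $J_\lambda(u_0,v_0)=\inf_{M_\lambda^-(\Omega)}J_\lambda$; by uniqueness of the $M_\lambda^-$-multiple this also identifies $t^{-}=1$. Passing $\phi_{u_n,v_n}^{''}(1)<0$ to the limit gives $\phi_{u_0,v_0}^{''}(1)\leq0$, and since $M_\lambda^{0}(\Omega)=\emptyset$ for $\lambda<\lambda_1$ (Corollary 3.2) the inequality is strict, so $(u_0,v_0)\in M_\lambda^-(\Omega)$ is the desired minimizer. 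I expect the main obstacle to be the step bounding $\int_\Omega b(x)|u_0|^{\alpha}|v_0|^{\beta}\,dx$ away from zero, since without it $(u_0,v_0)$ need not lie on the $M_\lambda^-$ branch and the fixed comparison point $t^{-}$ would be unavailable; the remainder is a routine adaptation of the $M_\lambda^+$ argument with maxima in place of minima.
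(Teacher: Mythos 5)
Your proposal is correct and follows essentially the same route as the paper: a minimizing sequence in $M_{\lambda}^{-}(\Omega)$, boundedness by coercivity, strong $L^{r}$ convergence of the lower-order terms, positivity of $\int_{\Omega}b|u_{0}|^{\alpha}|v_{0}|^{\beta}\,dx$ to produce the comparison point $\hat{t}=t^{-}$, and then the contradiction combining strict weak lower semicontinuity of the norm with the fact that $t=1$ is the global maximum of $\phi_{u_n,v_n}$ for elements of $M_{\lambda}^{-}(\Omega)$. The only (harmless) deviations are that you prove $\int_{\Omega}b|u_{0}|^{\alpha}|v_{0}|^{\beta}\,dx>0$ via (2.6) plus a Sobolev bound, where the paper reads it off from (2.2) and the positivity of $\lim_n J_{\lambda}(u_n,v_n)$, and that you spell out the final verification that the limit actually lies in $M_{\lambda}^{-}(\Omega)$ (via $M_{\lambda}^{0}(\Omega)=\emptyset$), a point the paper leaves implicit in ``completed as in the previous theorem.''
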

\begin{proof}[\bf Proof] By Corollary 3.3 we have
$J_{\lambda}(u,v)\geq\delta_{1}>0$ for all $(u,v)\in
M_{\lambda}^{-}(\Omega)$ and so $\displaystyle\inf_{(u,v)\in
M_{\lambda}^{-}(\Omega)}J_{\lambda}(u,v)\geq\delta_{1}$. Hence
there exists a minimizing sequence $\{(u_{n},v_{n})\}\subseteq
M_{\lambda}^{-}(\Omega)$ such that
$$\displaystyle\lim_{n\rightarrow\infty}J_{\lambda}(u_{n},v_{n})=\displaystyle\inf_{(u,v)\in
M_{\lambda}^{-}(\Omega)}J_{\lambda}(u,v)>0.$$ As in the previous
proof, since $J_{\lambda}$ is coercive, $\{(u_{n},v_{n})\}$ is
bounded in $W$ and we may assume, without loss of generality,
that $(u_{n},v_{n})\rightarrow(u_{0},v_{0})$ in $W$ and
$(u_{n},v_{n})\rightarrow(u_{0},v_{0})$ in $L^{r}(\Omega)\times
L^{r}(\Omega)$ for $1<r<\frac{2N}{N-2}$. By (2.2)
$$\begin{array}{rcl}J_{\lambda}(u_{n},v_{n})&=&(\frac{1}{2}-\frac{1}{q})(\int_{\Omega}|\nabla
u_{n}|^{2}dx+\int_{\Omega}|\nabla v_{n}|^{2}dx)\\
&+&(\frac{1}{q}-\frac{1}{\alpha+\beta})(\int_{\Omega}b(x)|u_{n}|^{\alpha}|v_{n}|^{\beta}dx)\\
\end{array}$$
and, since
$\displaystyle\lim_{n\rightarrow\infty}J_{\lambda}(u_{n},v_{n})>0$
and
$$\displaystyle\lim_{n\rightarrow\infty}(\int_{\Omega}b(x)|u_{n}|^{\alpha}|v_{n}|^{\beta}dx)=(\int_{\Omega}b(x)|u_{0}(x)|^{\alpha}|v_{0}(x)|^{\beta}dx),$$
we must have that
$(\int_{\Omega}b(x)|u_{0}(x)|^{\alpha}|v_{0}(x)|^{\beta}dx)>0$.
Hence there exist $\hat{t}>0$ such that
$(\hat{t}u_{0},\hat{t}v_{0})\in
M_{\lambda}^{-}(\Omega)$.\\

Suppose $(u_{n},v_{n})\nrightarrow(u_{0},v_{0})$ in $W$. Using the
facts that
$$(\int_{\Omega}|\nabla
u_{0}|^{2}dx+\int_{\Omega}|\nabla v_{0}|^{2}dx)<\displaystyle\liminf_{n\rightarrow\infty}(\int_{\Omega}|\nabla
u_{n}|^{2}dx+\int_{\Omega}|\nabla v_{n}|^{2}dx)$$ and that, since
$(u_{n},v_{n})\in M_{\lambda}^{-}(\Omega)$,
$J_{\lambda}(u_{n},v_{n})\geq J_{\lambda}(su_{n},sv_{n})$ for all
$s\geq0$, we have
$$\begin{array}{rcl}J_{\lambda}(\hat{t}u_{0},\hat{t}v_{0})&=&\frac{1}{2}\hat{t}^{2}(\int_{\Omega}|\nabla
u_{0}|^{2}dx+\int_{\Omega}|\nabla v_{0}|^{2}dx)\\
&-&\frac{\lambda\hat{t}^{q}}{q}(\int_{\Omega}a(x)|u_{0}|^{q}dx+\int_{\Omega}a(x)|v_{0}|^{q}dx)\\
&-&\frac{\hat{t}^{\alpha+\beta}}{\alpha+\beta}(\int_{\Omega}b(x)|u_{0}|^{\alpha}|v_{0}|^{\beta}dx)\\
&<&\displaystyle\lim_{n\rightarrow\infty}[\frac{1}{2}\hat{t}^{2}(\int_{\Omega}|\nabla
u_{n}|^{2}dx+\int_{\Omega}|\nabla v_{n}|^{2}dx)\\
&-&\frac{\lambda\hat{t}^{q}}{q}(\int_{\Omega}a(x)|u_{n}|^{q}dx+\int_{\Omega}a(x)|v_{n}|^{q}dx)\\
&-&\frac{\hat{t}^{\alpha+\beta}}{\alpha+\beta}(\int_{\Omega}b(x)|u_{n}|^{\alpha}|v_{n}|^{\beta}dx)]\\
&=&\displaystyle\lim_{n\rightarrow\infty}J_{\lambda}(\hat{t}u_{n},\hat{t}v_{n})\\
&\leq&\displaystyle\lim_{n\rightarrow\infty}J_{\lambda}(u_{n},v_{n})=\displaystyle\inf_{(u,v)\in M_{\lambda}^{-}(\Omega)}J_{\lambda}(u,v)\\
\end{array}$$ which is a contradiction. Hence $(u_{n},v_{n})\rightarrow(u_{0},v_{0})$ in
$W$ and the proof can be completed as in the previous theorem.
\end{proof}
\begin{corollary} System $(E_{\lambda})$ has at least two positive
solutions whenever \\$0<\lambda<\lambda_{1}$.
\end{corollary}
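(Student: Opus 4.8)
The plan is to read off the two solutions from the minimizers already constructed and then to promote them to genuine positive solutions of $(E_{\lambda})$. Fix $0<\lambda<\lambda_{1}$. By Theorem~4.1 there is a minimizer $(u_{0},v_{0})$ of $J_{\lambda}$ on $M_{\lambda}^{+}(\Omega)$, and by Theorem~4.2 a minimizer $(\tilde{u}_{0},\tilde{v}_{0})$ of $J_{\lambda}$ on $M_{\lambda}^{-}(\Omega)$. Since $M_{\lambda}^{+}(\Omega)\cap M_{\lambda}^{-}(\Omega)=\emptyset$, the two pairs are automatically distinct, so it suffices to show that each is a positive weak solution of the system.

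First I would verify that each minimizer is an unconstrained critical point of $J_{\lambda}$. By Corollary~3.2 we have $M_{\lambda}^{0}(\Omega)=\emptyset$ for $0<\lambda<\lambda_{1}$, so neither minimizer lies in $M_{\lambda}^{0}(\Omega)$. Moreover each is a local minimum of $J_{\lambda}$ on $M_{\lambda}(\Omega)$: the sets $M_{\lambda}^{+}(\Omega)$ and $M_{\lambda}^{-}(\Omega)$ are relatively open in $M_{\lambda}(\Omega)$, being cut out by the strict inequalities $\phi_{u,v}^{''}(1)>0$ and $\phi_{u,v}^{''}(1)<0$ respectively, so a minimizer over either piece is automatically a local minimizer over all of $M_{\lambda}(\Omega)$. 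Theorem~2.2 then yields $J_{\lambda}^{'}(u_{0},v_{0})=0$ and $J_{\lambda}^{'}(\tilde{u}_{0},\tilde{v}_{0})=0$, i.e.\ both pairs are weak solutions of $(E_{\lambda})$.

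Next I would arrange nonnegativity and then strict positivity. Because $J_{\lambda}(u,v)=J_{\lambda}(|u|,|v|)$ and $(u,v)\in M_{\lambda}^{\pm}(\Omega)$ if and only if $(|u|,|v|)\in M_{\lambda}^{\pm}(\Omega)$ (every integral entering $\phi_{u,v}$ and $\phi_{u,v}^{''}(1)$ is unchanged under $u\mapsto|u|$, $v\mapsto|v|$), replacing each minimizer by the pair of its absolute values alters neither the constraint membership nor the energy; hence we may assume $u_{0},v_{0}\ge 0$ and $\tilde{u}_{0},\tilde{v}_{0}\ge 0$. Elliptic regularity makes each component continuous on $\overline{\Omega}$, and the strong maximum principle shows that any nonnegative, nontrivial component is strictly positive throughout $\Omega$. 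Applied to both pairs, this produces two distinct solutions with strictly positive components.

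The step I expect to be the main obstacle is guaranteeing that \emph{both} components of each minimizer are nontrivial, i.e.\ excluding semi-trivial pairs such as $(u_{0},0)$, for which the maximum principle would only give positivity of a single component. For the $M_{\lambda}^{-}(\Omega)$ minimizer this is immediate: the proof of Theorem~4.2 already establishes $\int_{\Omega}b(x)|u_{0}|^{\alpha}|v_{0}|^{\beta}\,dx>0$, which forces $u_{0}\not\equiv 0$ and $v_{0}\not\equiv 0$. For the $M_{\lambda}^{+}(\Omega)$ minimizer the sign of the coupling integral does not by itself exclude a vanishing component, so here I would use that $b$ is somewhere positive to construct an admissible competitor of the form $(u_{0},\varepsilon w)$ with $w$ supported where $b$ and $u_{0}$ are positive; for small $\varepsilon>0$ its energy is strictly smaller than $J_{\lambda}(u_{0},0)$, contradicting minimality. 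Once both components are known to be nontrivial, the maximum principle of the previous paragraph completes the proof and yields the two positive solutions of $(E_{\lambda})$.
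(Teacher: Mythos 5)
Your proposal follows essentially the same route as the paper's own proof: take the minimizers produced by Theorems 4.1 and 4.2, replace them by $(|u|,|v|)$ (which changes neither the energy nor membership in $M_{\lambda}^{\pm}(\Omega)$), invoke Theorem 2.2 together with Corollary 3.2 to conclude that they are critical points of $J_{\lambda}$ on all of $W$, and finish with elliptic regularity plus a positivity principle (you use the strong maximum principle where the paper cites Trudinger's Harnack inequality; these play the same role). Your remark that $M_{\lambda}^{\pm}(\Omega)$ are relatively open in $M_{\lambda}(\Omega)$, so that a minimizer over either piece is a local minimizer over all of $M_{\lambda}(\Omega)$, supplies a detail the paper leaves implicit but actually needs in order to apply Theorem 2.2.

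Where you genuinely go beyond the paper is the semi-trivial-solution issue, and you are right that it is a real gap --- in the paper's proof as well as a potential one in yours. By (2.6), any pair $(u,0)\in M_{\lambda}(\Omega)$ with $u\neq 0$ satisfies $\phi_{u,0}''(1)=(2-q)\int_{\Omega}|\nabla u|^{2}dx>0$, hence lies in $M_{\lambda}^{+}(\Omega)$, and such pairs exist whenever $a$ is somewhere positive; so nothing in Theorem 4.1 prevents the $M_{\lambda}^{+}$ minimizer from having a vanishing component, in which case the Harnack/maximum-principle step cannot produce two positive components. Your observation that the $M_{\lambda}^{-}$ minimizer is safe (membership in $M_{\lambda}^{-}(\Omega)$ forces $\int_{\Omega}b|u|^{\alpha}|v|^{\beta}dx>0$) is correct. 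However, your patch for the $M_{\lambda}^{+}$ case is only a sketch and, as stated, does not close the gap. First, the competitor $(u_{0},\varepsilon w)$ will in general not lie in $M_{\lambda}^{+}(\Omega)$, so an energy comparison with $J_{\lambda}(u_{0},0)$ does not by itself contradict minimality over $M_{\lambda}^{+}(\Omega)$; you must rescale to $\bigl(t_{1}(u_{0},\varepsilon w)u_{0},\,t_{1}(u_{0},\varepsilon w)\varepsilon w\bigr)\in M_{\lambda}^{+}(\Omega)$ via the fibering map and check that its energy is still below $J_{\lambda}(u_{0},0)$ (this is doable, since for small $\varepsilon$ the local minimum value of $\phi_{(u_{0},\varepsilon w)}$ is at most $\phi_{(u_{0},\varepsilon w)}(1)=J_{\lambda}(u_{0},\varepsilon w)$). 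Second, the claimed energy decrease needs the orders $\varepsilon^{2}$, $\varepsilon^{q}$, $\varepsilon^{\beta}$ compared: the gain from the coupling term is of order $\varepsilon^{\beta}$, but on a set where $b>0$ and $u_{0}>0$ the weight $a$ may be negative, making the term $-\frac{\lambda\varepsilon^{q}}{q}\int_{\Omega}a|w|^{q}dx$ positive; since $q$ may be smaller than $\beta$, this can dominate and the difference $J_{\lambda}(u_{0},\varepsilon w)-J_{\lambda}(u_{0},0)$ need not be negative. So the idea is the right one, but a correct version requires the fibering-map projection and a more careful choice of $w$ (for instance, locating $w$ where additionally $\int_{\Omega}a|w|^{q}dx\geq 0$, which requires an assumption on the overlap of $\{a>0\}$ and $\{b>0\}$ that neither you nor the paper makes explicit).
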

\begin{proof}[\bf Proof]
By Theorems 4.1 and 4.2 there exist $(u^{+},v^{+})\in
M_{\lambda}^{+}(\Omega)$ and $(u^{-},v^{-})\in
M_{\lambda}^{-}(\Omega)$ such that
$$J_{\lambda}(u^{+},v^{+})=\displaystyle\inf_{(u,v)\in
M_{\lambda}^{+}(\Omega)}J_{\lambda}(u,v)$$ and
$$J_{\lambda}(u^{-},v^{-})=\displaystyle\inf_{(u,v)\in
M_{\lambda}^{-}(\Omega)}J_{\lambda}(u,v).$$ Moreover
$J_{\lambda}(u^{\pm},v^{\pm})=J_{\lambda}(|u^{\pm}|,|v^{\pm}|)$
and $(|u^{\pm}|,|v^{\pm}|)\in M^{\pm}_{\lambda}(\Omega)$ and so
we may assume $u^{\pm}\geq0$ and $v^{\pm}\geq0$. By Lemma 2.2
$(u^{\pm},v^{\pm})$ are critical points of $J_{\lambda}$ on $W$
and hence are weak solutions (and so by standard regularity
results classical solutions) of $(E_{\lambda})$. Finally, by the
Harnack inequality due to Trudinger \cite{21}, we obtain that
$(u^{\pm},v^{\pm})$ are positive solutions of $(E_{\lambda})$.
\end{proof}

\end{document}